\newtheorem{rem}{Remark}
\theoremstyle{definition} 
\begin{document}

\title{On Maximizing the Distance to a Given Point over an Intersection of Balls II
}


\author{Marius Costandin  
}


\institute{C. Marius \at
              General Digits \\
              \email{costandinmarius@gmail.com}             \\
}

\date{Received: date / Accepted: date}

\maketitle

\begin{abstract}
In this paper the problem of maximizing the distance to a given fixed point over an intersection
of balls is considered. It is known that this problem is NP complete in the general case, since 
any subset sum problem can be solved upon solving a maximization of the distance over an 
intersection of balls to a point inside the convex hull. 

 The general context is: in \cite{funcos1} it is shown that exists 
a polynomial algorithm which always solves the maximization problem if the given point is outside 
 the convex hull of the centers of the balls. Naturally one asks if there is a polynomial algorithm which 
solves the problem for a point inside the convex hull. A conjecture stated in a previous paper, \cite{funcos1} is proved, under 
slightly stronger conditions.  The proven conjecture allows a polynomial 
algorithm for points on the facets of the convex hull and shows that such points share the maximizer 
with all the points in a small enough ball centered at it, thus including points in the interior of the 
convex hull of the ball centers.

%
\keywords{non-convex optimization}
 \subclass{90C05}
\end{abstract}

\section{Introduction}
 Let $m>n \in \mathbb{N}$ and $C_k \in \mathbb{R}^n$ for $k \in \{1, \hdots, m\}$ such that any facet of their convex hull does not contain more than $n$ points. 
For a fixed $C_0 \in \mathbb{R}^n$ and $r > 0$ consider 
\begin{align}\label{E1}
\mathcal{Q} &= \bigcap_{k = 1}^m \bar{\mathcal{B}}(C_k, r) \hspace{0.5cm} h(x) = \max_{k \in \{1, \hdots. m\}} \|x - C_k\|^2 - r^2 \hspace{0.5cm} g(x) =  \|x - C_0\|^2 \nonumber \\ \mathcal{P}_{R^2} &= \left\{ x \in \mathbb{R}^n \biggr| \max_{k \in \{1, \hdots. m\}} h(x) - g(x) \leq -R^2 \right\}  \hspace{0.3cm} \mathcal{H}^{\star} = \mathop{\text{argmin}}_{h(x) \leq 1} h(x) - g(x)
\end{align} where $R>0$ and $\bar{\mathcal{B}}(y,R) = \{x \in \mathbb{R}^n | \|x - y\| \leq R \}$ denotes the closed ball of center $y$ and radius $R$. 

The problem studied in this paper is
\begin{align}\label{E2}
\max_{x \in \mathcal{Q}} \|x - C_0\|
\end{align}
The problem (\ref{E2}) is NP complete in general. Noting that $h(x) - g(x)$ is a piecewise linear function, follows that finding an element in $\mathcal{H}^{\star}$ is a convex optimization problem. 

The following results from \cite{funcos1} are reiterated:
\begin{enumerate}
\item The set $\mathcal{Q} = \{x | h(x) \leq 0\} \subseteq \mathcal{P}_{0^2}$

\item If $C_0 \in \text{int}(\text{conv} \{C_1, \hdots, C_m\})$ then the set $\mathcal{H}^{\star}$ has exactly one element $x^{\star}$. This does not depend on the choice of $C_0$ and is the center of the minimum enclosing ball (MEB) of the points $C_1, \hdots, C_m$, see Theorem 2 in \cite{funcos1}. In this case $\mathcal{H}^{\star} \subseteq \mathcal{Q} $ and exists $\underline{R} > 0$ such that $\mathcal{H^{\star}} = \mathcal{P}_{\underline{R}^2}$. The Theorem 1 in  \cite{funcos1} states that
\begin{align}\label{E3}
\max_{x \in \mathcal{Q}} \|x - C_0\| = \min \{R > 0 | \mathcal{P}_{R^2} \subseteq \mathcal{Q}\}
\end{align} Basically, this means that as $R$ increases from $0$ to $\underline{R}$ the set $\mathcal{P}_{R^2}$ evolves from initially containing $Q$ to being included in $Q$. The parameter $R$ for which it first enters $Q$, is actually the maximum distance from $C_0$ to a point in $Q$. The extreme points will be the vertices of the polytope $\mathcal{P}_{R^2}$ last to enter the set $Q$, hence finitely many.  

\item If $C_0 \not\in \text{conv} \{C_1, \hdots, C_m\}$ then the set $\mathcal{P}_{R^2}$ is unbounded, for any $R \geq 0$. The Theorem 1 in \cite{funcos1} states that in this case
\begin{align}
\max_{x \in \mathcal{Q}} \|x - C_0\| = \max \{R > 0 | \mathcal{Q} \cap \mathcal{P}_{R^2} \neq \emptyset\}
\end{align}  Because $\mathcal{Q}$ is bounded and although unbounded $\mathcal{P}_{R^2}$ is shrinking as $R$ increases (being the level sets of $h(x) - g(x)$ one has $\mathcal{P}_{R_1^2} \subseteq \mathcal{P}_{R_2^2}$ for $R_1 \geq R_2$), follows that exists $R_0$ such that $\mathcal{P}_{R^2} \cap \mathcal{Q} = \emptyset$ for all $R > R_0$. Therefore the set $\mathcal{P}_{R^2}$ evolves from initially containing $\mathcal{Q}$ for $R = 0$ to not having common elements for $R > R_0$. The largest parameter $R$ for which the set $\mathcal{P}_{R^2}$ has common elements to $\mathcal{Q}$ is actually the maximum distance from $C_0$ to a point in $Q$. In this case it is proven that there is always an unique extreme point, see \cite{funcos1}. For this case it is possible to compute in polynomial time the maximum distance and the maximizer, as showed in \cite{funcos1}.

\item Finally, if $C_0 \in \partial \text{conv}(C_1, \hdots, C_m)$ then from \cite{funcos1} one has:
\begin{align}
\max_{x \in \mathcal{Q}} \|x - C_0\| = \underline{R} = \max \{R > 0 | \mathcal{P}_{R^2} \neq \emptyset\} = \|y - C_0\| \hspace{0.3cm} \forall y \in \partial \mathcal{Q} \cap \mathcal{H}^{\star}
\end{align} For this case it is conjectured in \cite{funcos1} that the number of extreme points is either one, either an (uncountable) infinity. 
\end{enumerate} 


\section{Geometry Results}
We begin this section with a proof for the above conjecture. 
\begin{lemma}\label{L1}
Let $C_0 \in \partial \text{conv}(C_1, \hdots,C_m)$ with $C_0 = \sum_{k=1}^p \alpha_k \cdot C_{\sigma_k}$ with $\alpha_k > 0$, $\sum_{k=1}^p \alpha_k = 1$, $p \leq n$ and $\sigma_k \in \{1, \hdots, m\}$ with $ \sigma_k \neq \sigma_j$ for $k \neq j$.  Then if
\begin{enumerate}
\item $p = n$ then the number of solution to the problem (\ref{E2}) is exactly one or two.
\item $p < n$ then the number of solution to the problem (\ref{E2}) is exactly one or an uncountable infinity.
\end{enumerate}
\end{lemma}
\begin{proof} The set $\mathcal{P}_{R^2}$ is the intersection of the following sets: \begin{align}\label{E6b}
&\|x - C_k\|^2 - r^2 - \|x - C_0\|^2 \leq  -R^2 \iff \nonumber \\
& 2\cdot (C_0 - C_k)^T\cdot x  + \|C_k\|^2 - r^2 + R^2 - \|C_0\|^2 \leq 0
\end{align} for all $k \in \{1, \hdots, m\}$

Assume w.l.o.g that $C_0 \in \text{conv}(C_1, \hdots, C_{p})$ with $p \leq n$, hence exist the real numbers $\alpha_k > 0$ with $\sum_{k=1}^p \alpha_k = 1$ and $C_0 = \sum_{k=1}^p \alpha_k \cdot C_k$. The corresponding facets of $\mathcal{P}_{R^2}$  are:

\begin{align}\label{E16a}
\begin{cases}
2 \cdot (C_0 - C_1)^T \cdot x + \|C_1\|^2 \leq \|C_0\|^2 - R^2 + r^2\\
\vdots \\
2 \cdot (C_0 - C_p)^T \cdot x + \|C_p\|^2 \leq \|C_0\|^2 - R^2 + r^2\\
\end{cases}
\end{align} It is shown that exists $R$ which meets the above system with equality. Indeed, by setting equality and subtracting the last line from the first (p-1) lines, one gets: 
\begin{align}\label{E17a}
\begin{cases}
2 \cdot (C_p - C_1)^T \cdot x - \|C_p\|^2 + \|C_1\|^2 = 0\\
\vdots \\
2 \cdot (C_p - C_{p-1})^T \cdot x - \|C_p\|^2 + \|C_{p-1}\|^2 = 0\\
2 \cdot (C_0 - C_p)^T \cdot x + \|C_p\|^2 = \|C_0\|^2 - R^2 + r^2\\
\end{cases}
\end{align} The first $p-1$ lines in (\ref{E17a}) are always linearily independent because the points $C_1, \hdots, C_p$ are affinely independent. Let $v_p, \hdots, v_n \in \{C_p - C_1, \hdots, C_p - C_{p-1}\}^{\perp}$ be orthogonal with $\|v_p\| = \hdots = \|v_n\| = 1$. Add the equations $v_k^T \cdot (x - C_p) = t_k$ for $t_k \in \mathbb{R}$ for all $k \in \{p, \hdots, n\}$. One finds $x(t_p, \hdots, t_n) = x(0, \hdots, 0) + \sum_{k=1}^p t_k \cdot v_k$ unique for a given $t_p, \hdots, t_n$ which meets them. Replacing this in the last equation, together with 
$C_0 = \sum_{k=1}^p \alpha_k \cdot C_k$ where $\sum_{k=1}^{p}\alpha_k = 1, \alpha_k > 0 $ one has 
\begin{align}\label{E18a}
&2 \cdot  \sum_{k=1}^p \alpha_k\cdot (C_k - C_p)^T \cdot x + \|C_p\|^2 = \|C_0\|^2 - R^2 + r^2 \nonumber \\
&\sum_{k=1}^{p-1} \alpha_k \cdot \left( \|C_k\|^2 - \|C_p\|^2\right) + \|C_p\|^2 = \sum_{k=1}^p \alpha_k \cdot \|C_k\|^2 = \|C_0\|^2 -R^2 + r^2 
\nonumber \\
& = \sum_{k=1}^p \alpha_k \cdot (\|C_k\|^2 - r^2) -\|C_0\|^2 = - R^2 
\end{align} Since $\exists x_0 \in \mathcal{Q}$ (i.e $\mathcal{Q}$ is not empty) one has $\|x_0 - C_k\| \leq r$ for all $k \in \{1, \hdots, m\}$, therefore $\|C_k\|^2 -r^2 \leq -\|x_0\|^2 + 2 \cdot x_0^T \cdot C_k$
and (\ref{E18a}) becomes
\begin{align}
-R^2 = \sum_{k=1}^p \alpha_k \cdot (\|C_k\|^2 - r^2) -\|C_0\|^2 &\leq -\|x_0\|^2 + 2 \cdot x_0^T \cdot \sum_{k=1}^p\alpha_k\cdot C_k - \|C_0\|^2 \nonumber \\
&=  -\|x_0 - C_0\|^2 \leq 0
\end{align} hence (\ref{E18a}) is feasible.  Let 
\begin{align}
\underline{R} = \sqrt{-\sum_{k=1}^p \alpha_k \cdot (\|C_k\|^2 - r^2) +\|C_0\|^2}
\end{align} Therefore for $R = \underline{R}$ one has $x = x(0) + \text{span}\left\{ \{C_p - C_1, \hdots, C_p - C_{p-1}\}^{\perp}\right\}$ meets (\ref{E16a}) with equality. Let $y$ in the feasible set of (\ref{E16a}) for $R = \underline{R}$ with $y = x + \beta \cdot u$ for $\beta \in \mathbb{R}$ and $u \in \mathbb{R}^n$. 

Therefore
\begin{align}\label{E21a}
\begin{cases}
2 \cdot (C_0 - C_1)^T \cdot (x(t) + \beta \cdot u) + \|C_1\|^2 \leq \|C_0\|^2 - \underline{R}^2 + r^2\\
\vdots \\
2 \cdot (C_0 - C_p)^T \cdot (x(t) + \beta \cdot u) + \|C_p\|^2 \leq \|C_0\|^2 - \underline{R}^2 + r^2\\
\end{cases}
\end{align} but since $x$ meets (\ref{E17a}) with equality, follows that (\ref{E21a}) is equivalent to
\begin{align}\label{E22a}
\begin{cases}
2 \cdot (C_0 - C_1)^T \cdot u  \leq 0\\
\vdots \\
2 \cdot (C_0 - C_p)^T \cdot u  \leq 0\\
\end{cases} 
\end{align} Multiply each line in (\ref{E22a}) with $\alpha_k > 0$ and add them to obtain the following for each $k \in \{1, \hdots, p\}$
\begin{align}
2 \cdot \left( C_0 - \sum_{k=1}^n \alpha_k \cdot C_k \right)^T \cdot u = 0 \ \Rightarrow \ \alpha_k \cdot (C_0 - C_k)^T \cdot u = 0
\end{align} This is motivated by the fact that if the sum of $p$ non-positive numbers is zero, then each number must be zero. Therefore the product $\alpha_k \cdot (C_0 - C_k)^T \cdot u $ must be zero. Since $\alpha_k > 0$, this means:
\begin{align}
\begin{cases}
2 \cdot (C_0 - C_1)^T \cdot u  = 0\\
\vdots \\
2 \cdot (C_0 - C_p)^T \cdot u  = 0\\
\end{cases} \iff \begin{cases}
2 \cdot (C_p - C_1)^T \cdot u  = 0\\
\vdots \\
2 \cdot (C_p - C_{p-1})^T \cdot u  = 0\\
2 \cdot (C_0 - C_p)^T \cdot u = 0
\end{cases}
\end{align} from the first $p-1$ equations follows that $u \in \{C_p - C_1, \hdots, C_p - C_{p-1}\}^{\perp}$ hence $u \in \text{span}\left\{ \{C_p - C_1, \hdots, C_p - C_{p-1}\}^{\perp}\right\}$ and as such $y \in x(0, \hdots, 0) + \text{span}\left\{ \{C_p - C_1, \hdots, C_p - C_{p-1}\}^{\perp}\right\}$. 

Recall from (\ref{E6b}) the structure of $\mathcal{P}_{\underline{R}^2}$. The intersection of the half-spaces formed with the points $C_k$ for $k > p$ is unbounded since $C_0 \not\in \text{conv} \{C_k | k \in \{p+1, \hdots, m\}\}$ while the intersection of the half-spaces formed with the points $\{C_1, \hdots, C_p\}$ has the form $x_0 + \text{span} \left\{ \{C_p - C_1, \hdots, C_p - C_{p-1}\}^{\perp} \right\}$. Let us denote by $\mathcal{P}_{R^2}^{-}$ the intersection of the half-spaces formed with the points $C_k$ for $k > p$ and by $\mathcal{P}_{R^2}^{0}$ the intersection of the half-spaces formed with the points $C_k$ for $k \leq p$. As such one has $\mathcal{P}_{R^2} = \mathcal{P}_{R^2}^{-} \cap \mathcal{P}_{R^2}^{0}$.

Two cases can be distinguished :
\begin{enumerate}
\item If $p = n$ then the intersection of the half-spaces formed with the points $\{C_1, \hdots, C_{p = n}\}$ is an axis since the dimension of the linear space \\ $\text{span} \left\{ \{C_n - C_1, \hdots, C_n - C_{n-1}\}^{\perp} \right\}$ is one. Hence $\mathcal{P}_{\underline{R}^2} \cap \partial\mathcal{Q}$ contains exactly one point or exactly two points. Indeed, 
\begin{enumerate}
\item if $\mathcal{P}_{\underline{R}^2}^{-} \cap \mathcal{P}_{\underline{R}^2}^{0} \cap \text{int}(\mathcal{Q}) \neq \emptyset $ then $\mathcal{P}_{\underline{R}^2} \cap \partial \mathcal{Q} \subseteq \mathcal{P}_{\underline{R}^2}^{0} \cap \partial \mathcal{Q}$ which has at most two points since it is the intersection of an axis with the boundary of the set $\mathcal{Q}$. 
\item otherwise, if $\mathcal{P}_{\underline{R}^2}^{-} \cap \mathcal{P}_{\underline{R}^2}^{0} \cap \text{int}(\mathcal{Q}) = \emptyset $ then exists $R_0 \leq \underline{R}$ such that for all $R > R_0$ one has $\mathcal{P}_{R^2} \cap \partial \mathcal{Q} = \emptyset$. Assume that exists $x_0 \neq x_1 \in \mathcal{P}_{R_0^2} \cap \partial \mathcal{Q} $ hence exists $x_3 = \frac{x_1 + x_2}{2} \in \text{int} (\mathcal{Q}) \cap \mathcal{P}_{R_0^2}$ hence exists $R>R_0$ with $\mathcal{P}_{R^2} \cap \partial \mathcal{Q} \neq \emptyset$, which is a contradiction. Therefore is this situation as well the maximum number of points on $\mathcal{P}_{R^2} \cap \partial \mathcal{Q}$ is exactly one. 
\end{enumerate}
\item Otherwise, if $p < n$ then as above one proves
\begin{enumerate}
\item if $\mathcal{P}_{\underline{R}^2}^{-} \cap \mathcal{P}_{\underline{R}^2}^{0} \cap \text{int}(\mathcal{Q}) \neq \emptyset $ then $\mathcal{P}_{\underline{R}^2} \cap \partial \mathcal{Q} = \left(\mathcal{P}_{\underline{R}^2}^{0} \cap \partial \mathcal{Q}\right) \cap \mathcal{P}_{\underline{R}^2}^{-} $ For this case it can be shown that the resulting intersection has an uncountable number of points. 
\item otherwise, if $\mathcal{P}_{\underline{R}^2}^{-} \cap \mathcal{P}_{\underline{R}^2}^{0} \cap \text{int}(\mathcal{Q}) = \emptyset $ then using the same reasoning as in the previous case one concludes that the maximum number of points on $\mathcal{P}_{R^2} \cap \partial \mathcal{Q}$ is exactly one. 
\end{enumerate}
\end{enumerate}
\end{proof}

\begin{corollary}\label{C1}
In the particular case in which $\mathcal{Q} \subseteq \text{int}(\text{conv}(C_1, \hdots, C_m))$ for $C_0 \in \partial \text{conv}(C_1, \hdots, C_m)$ with $C_0 = \sum_{k=1}^n \alpha_k \cdot C_{\sigma_k}$ with $\alpha_k > 0$, $\sum_{k=1}^n \alpha_k = 1$ and $\sigma_k \in \{1, \hdots, m\}$ with $ \sigma_k \neq \sigma_j$ for $k \neq j$ the number of solutions to the problem 
\begin{align}
\max_{x\in \mathcal{Q}} \|x - C_0\|
\end{align} is exactly one. 
\end{corollary}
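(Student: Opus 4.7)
The plan is to invoke Lemma \ref{L1} with $p=n$ and rule out the two-solution alternative using the hypothesis $\mathcal{Q} \subseteq \text{int}(\text{conv}(C_1,\hdots,C_m))$. By the lemma, under the corollary's hypotheses, $\partial \mathcal{Q} \cap \mathcal{P}_{\underline{R}^2}$ contains either one or two maximizers, and all of them lie on the axis $\mathcal{P}_{\underline{R}^2}^{0} = x(0,\hdots,0) + \text{span}\{\{C_{\sigma_n}-C_{\sigma_1},\hdots,C_{\sigma_n}-C_{\sigma_{n-1}}\}^{\perp}\}$, which is a single line orthogonal to the hyperplane $H := \text{aff}(C_{\sigma_1},\hdots,C_{\sigma_n})$.

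Suppose, for contradiction, that there are two distinct maximizers $y_1,y_2 \in \partial \mathcal{Q} \cap \mathcal{P}_{\underline{R}^2}$. Since both sit on the axis $\mathcal{P}_{\underline{R}^2}^{0}$, which is perpendicular to $H$, I can write $y_i = P + \tau_i \nu$ for a common foot $P \in H$, a unit normal $\nu$ to $H$, and scalars $\tau_1,\tau_2 \in \mathbb{R}$. Because $C_0 = \sum_{k=1}^n \alpha_k C_{\sigma_k} \in H$, the Pythagorean identity gives $\|y_i - C_0\|^2 = \|P - C_0\|^2 + \tau_i^2$, and the equality $\|y_1-C_0\| = \|y_2-C_0\| = \underline{R}$ forces $\tau_1 = -\tau_2 \neq 0$. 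Hence the two candidate maximizers are the mirror images of each other across $H$.

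Next, I would argue that $H$ is a supporting hyperplane of $\text{conv}(C_1,\hdots,C_m)$. Indeed, since $C_0 \in \partial \text{conv}(C_1,\hdots,C_m)$ lies in the relative interior of the simplex $\text{conv}(C_{\sigma_1},\hdots,C_{\sigma_n})$ (all $\alpha_k > 0$), and this simplex is $(n-1)$-dimensional (its vertices being affinely independent by the standing facet-cardinality assumption), it must itself be a facet of $\text{conv}(C_1,\hdots,C_m)$. The supporting hyperplane along this facet is exactly $H$, and therefore the whole convex hull lies on one closed side of $H$. Consequently one of $y_1,y_2$, say $y_2$, lies strictly on the opposite open side of $H$ and thus strictly outside $\text{conv}(C_1,\hdots,C_m)$. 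This contradicts $y_2 \in \mathcal{Q} \subseteq \text{int}(\text{conv}(C_1,\hdots,C_m))$, so only one maximizer can exist.

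The main obstacle, I expect, is not any deep new estimate but rather the careful geometric identification of $H$ as a supporting hyperplane of $\text{conv}(C_1,\hdots,C_m)$; this step tacitly uses the affine independence of $C_{\sigma_1},\hdots,C_{\sigma_n}$ together with the assumption that no facet contains more than $n$ points. Everything else reduces to the Pythagorean decomposition on the axis of symmetry $\mathcal{P}_{\underline{R}^2}^{0}$ and to the convexity of $\mathcal{Q}$, both of which are already developed in the proof of Lemma \ref{L1}.
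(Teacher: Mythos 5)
Your proof is correct and follows essentially the same route as the paper: invoke Lemma \ref{L1} to place any two candidate maximizers on the axis orthogonal to the hyperplane $H$ through $C_{\sigma_1},\hdots,C_{\sigma_n}$, and exploit the fact that $\mathcal{Q}$ lies strictly on one side of the supporting hyperplane $H$, which contains $C_0$. The only cosmetic difference is the direction of the contradiction: you deduce from the equal distances that the two points would be mirror images across $H$, so one would leave $\text{int}(\text{conv}(C_1,\hdots,C_m))$, whereas the paper deduces from both points lying in $\mathcal{Q}$, hence strictly on the same side of $H$, that their distances to $C_0$ must differ.
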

Note here the the difference to Lemma \ref{L1} stays in $p = n$. 
\begin{proof}
Indeed, from Lemma \ref{L1} the number of solution is either exactly one either two. However, the two solutions would lie on the intersection of an axis perpendicular on the hyperplane formed by the points $C_{\sigma_1}, \hdots, C_{\sigma_n}$ with the boundary of the set $\mathcal{Q}$. Since this hyperplane does not intersect the set $\mathcal{Q}$ and since $C_0$ belongs to this hyperplane follows that one of these two points is more distant to $C_0$ then the other one. 
\end{proof}

In the following a remark is given related to the above proof:
\begin{rem} Let $x$ be a solution to (\ref{E17a}). Then after a reorganization of the terms in each equation, one has
\begin{align}
\begin{cases}
\|x - C_1\|^2 = \|x - C_0\|^2 -\underline{R}^2 + r^2\\
\vdots \\
\|x - C_p\|^2 = \|x - C_0\|^2 -\underline{R}^2 + r^2\\
\end{cases} \Rightarrow \|x - C_1\|^2 = \hdots = \|x - C_p\|^2
\end{align} meaning that $x$ is equidistant to the points whom $C_0$ is a convex combination of. In particular, for $p = n$ follows that the center of the ball determined by the points $C_1, \hdots, C_n$ (with the certer forced to lie in the hyperplane formed by the points) is also a solution. This means that in this case, the axis which the most distant point to $C_0$ belongs to, is an axis which passes through the center of a ball determined by the points $C_1, \hdots, C_n$ and orthogonal on the hyperplane determined by these points. 
\end{rem}

In the following we shall prove that in the conditions of the Corollary \ref{C1}, the solution to (\ref{E2}) is a vertex of $\mathcal{Q}$. For this we first give the following lemma:

\begin{lemma}\label{L2}
 If the point $C_0 \in \text{int}(\text{conv}(C_1, \hdots, C_m))$ then the farthest point to it in $\mathcal{Q}$ is a corner of $\mathcal{Q}$.
\end{lemma}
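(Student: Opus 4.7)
The plan is to combine the ``first-entry polytope'' characterization (E3) with a second-order optimality analysis on the objective.

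First, since $C_0 \in \mathrm{int}(\mathrm{conv}(C_1,\ldots,C_m))$, item~2 of the introduction tells me that the farthest point $x^*$ (at distance $R^* = \|x^* - C_0\|$) is a vertex of the polyhedron $\mathcal{P}_{R^{*2}}$, so at least $n$ halfspace constraints are active at $x^*$ with linearly independent coefficient vectors $(C_0 - C_{k_i})$. Recalling from the computation in the proof of Lemma~\ref{L1} that such a constraint is active exactly when $\|x^* - C_{k_i}\| = r$, this shows $x^*$ lies on at least $n$ bounding spheres of $\mathcal{Q}$.

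To upgrade this to the statement that $x^*$ is a \emph{corner} of $\mathcal{Q}$ (meaning the outward sphere normals $\{x^* - C_{k_i}\}$ themselves span $\mathbb{R}^n$, so $x^*$ is a $0$-dimensional face of $\mathcal{Q}$), I would argue by contradiction. If these normals span only $d < n$ dimensions, pick a non-zero tangent $u$ with $u \cdot (x^* - C_k) = 0$ for every active $k$, and build a smooth curve $x(t) = x^* + tu + (t^2/2)\,a + O(t^3)$ lying on the intersection of all active spheres ($a$ is chosen to solve $a \cdot (x^* - C_k) = -\|u\|^2$ for $k \in A$). For small $|t|$ this curve stays in $\mathcal{Q}$ by continuity of the inactive constraints. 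Combining the first-order KKT identity $(x^* - C_0) = \sum_{k \in A}\mu_k(x^* - C_k)$ (with $\mu_k \geq 0$) with the second-order Taylor expansion of the objective yields
\begin{align*}
\|x(t) - C_0\|^2 \;=\; R^{*2} + t^2 \|u\|^2 (1 - \Lambda) + O(t^3), \qquad \Lambda = \sum_{k \in A}\mu_k,
\end{align*}
so maximality of $x^*$ along this curve forces $\Lambda \geq 1$.

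The desired contradiction should come from the matching strict bound $\Lambda < 1$, for which the quantitative input is the interior hypothesis itself. Writing $C_0 = \sum_{k=1}^m \beta_k C_k$ with \emph{strictly positive} weights $\beta_k$ (available because $C_0$ is in the topological interior of the hull), the identity $\|x - C_0\|^2 = \sum_k \beta_k \|x - C_k\|^2 - \delta$ with $\delta = \sum_k \beta_k \|C_k\|^2 - \|C_0\|^2 > 0$ (strict Jensen, since the $C_k$ are distinct) instantly gives $R^* < r$. Comparing the two representations
\begin{align*}
\sum_{k=1}^m \beta_k (x^* - C_k) \;=\; x^* - C_0 \;=\; \sum_{k \in A}\mu_k (x^* - C_k),
\end{align*}
and exploiting the positivity of every $\beta_k$—including the ones attached to inactive spheres—should pin $\Lambda$ strictly below $\sum_k \beta_k = 1$. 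Carrying out this algebraic squeeze cleanly, in particular in degenerate configurations where some of the active sphere normals are themselves linearly dependent (so that solvability of the system $a \cdot (x^* - C_k) = -\|u\|^2$ needs a separate argument), is where I expect the main work of the lemma to sit.
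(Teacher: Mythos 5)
Your first paragraph already \emph{is} the paper's proof: $x^{\star}$ is a vertex of $\mathcal{P}_{(R^{\star})^2}$, so at least $n$ of the defining constraints are active there, and each active constraint $\|x^{\star}-C_k\|^2 - r^2 - \|x^{\star}-C_0\|^2 = -(R^{\star})^2$ combined with $\|x^{\star}-C_0\| = R^{\star}$ forces $\|x^{\star}-C_k\| = r$; the paper stops at exactly this point, taking ``corner of $\mathcal{Q}$'' to mean a point lying on at least $n$ of the bounding spheres. Everything after that in your proposal aims at a strictly stronger conclusion --- that the normals $x^{\star}-C_{k_i}$ span $\mathbb{R}^n$, i.e.\ that $x^{\star}$ is a zero-dimensional face of $\mathcal{Q}$ --- which the lemma as stated neither claims nor needs. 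Be aware that this extra part is not complete as written: the second-order computation correctly forces $\Lambda=\sum_{k\in A}\mu_k\ge 1$, but the matching bound $\Lambda<1$ does not follow from comparing the two representations of $x^{\star}-C_0$, since a vector in the cone generated by the $x^{\star}-C_k$ generally admits many nonnegative representations with different coefficient sums (and you yourself flag this, together with the solvability of the system for $a$ in degenerate configurations, as open). So for the statement as the paper intends it your argument is correct and coincides with the paper's; the proposed ``upgrade'' should be dropped or treated as a separate, unproven claim.
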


\begin{proof}
From (\ref{E3}) follows that $x^{\star}$ a solution to (\ref{E2}) is one of the vertexes of $\mathcal{P}_{(R^{\star})^2}$ last to enter the set $\mathcal{Q}$, where $R^{\star} = \max_{x \in \mathcal{Q}} \|x - C_0\|$ . That is $x^{\star} \in \mathcal{P}_{(R^{\star})^2} \cap \partial \mathcal{Q}$. Assume w.l.o.g that the vertex $x^{\star}$ is the intersection of the following facets of $\mathcal{P}_{(R^{\star})^2}$
\begin{align}\label{E18b}
&\begin{cases}
2\cdot (C_0 - C_1)^T \cdot x + \|C_1\|^2 = \|C_0\|^2 + r^2 - (R^{\star})^2 \\
\vdots \\
2\cdot (C_0 - C_n)^T \cdot x + \|C_n\|^2 = \|C_0\|^2 + r^2 - (R^{\star})^2
\end{cases} \iff \nonumber \\
&
\begin{cases}
\|x - C_1\|^2 - r^2 - \|x - C_0\|^2 = -(R^{\star})^2 \\
\vdots \\
\|x - C_n\|^2 - r^2 - \|x - C_0\|^2 = -(R^{\star})^2
\end{cases}
\end{align} but since $\|x^{\star} - C_0\| = R^{\star}$ follows from (\ref{E18b}) that
\begin{align}
\begin{cases}
\|x^{\star} - C_1\|^2 - r^2 = 0 \\
\vdots \\
\|x^{\star} - C_n\|^2 - r^2 = 0
\end{cases}
\end{align} that is, $x^{\star}$ is a corner of the intersection of balls $\mathcal{Q}$ being on the intersection of at least $n$ spheres. 
\end{proof} Before giving the main result of this section we give a small technical lemma to be used later. This lemma is used to show that if at one moment a point $C_1$, from a group of points, is the farthest to $y$, then letting $y$ slide on an axis to reach another point $z$ to whom $C_1$ is no longer the farthest from the group of points, then $C_1$ will never be the farthest to any points on that axis going in the same direction. 

\begin{lemma} \label{L3}
Let $z,y,C_1,C_2 \in \mathbb{R}^n$ with $\| y - C_1\| = \| y - C_2\| $. Assume, without loss of generality, that $\|z - C1\|^2 \geq \|z - C_2\|^2 $ then
\[
\|y + t  (z-y) - C_1\|^2 \geq \| y + t  (z-y) - C_2 \|^2, \hspace{1cm} \forall t \geq 0. 
\]
\end{lemma}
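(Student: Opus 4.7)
The plan is to observe that the quantity we wish to control is an affine function of $t$, so its sign for $t \ge 0$ is pinned down by its two values at $t = 0$ and $t = 1$.

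Define
\[
f(t) = \|y + t(z-y) - C_1\|^2 - \|y + t(z-y) - C_2\|^2.
\]
The common quadratic term $\|y + t(z-y)\|^2$ cancels in the difference, so after expansion
\[
f(t) = -2\,\bigl(y + t(z-y)\bigr)^T (C_1 - C_2) + \|C_1\|^2 - \|C_2\|^2,
\]
which is affine in $t$. The hypothesis $\|y - C_1\| = \|y - C_2\|$ gives $f(0) = 0$, and the hypothesis $\|z - C_1\|^2 \ge \|z - C_2\|^2$ gives $f(1) \ge 0$. Since $f$ is affine with $f(0) = 0$, we have $f(t) = t\,f(1)$, and thus $f(t) \ge 0$ for every $t \ge 0$, which is exactly the claim.

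There is no real obstacle here: the only conceptual point is noticing the cancellation that makes $f$ affine; once that is in hand, the two boundary values at $t = 0$ and $t = 1$ determine the sign on the whole ray. I would present the proof in this order (define $f$, expand and observe affinity, evaluate at $0$ and $1$, conclude by linearity), as it is both the shortest and the most transparent argument.
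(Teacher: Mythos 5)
Your proof is correct and takes essentially the same route as the paper: both define the difference of squared distances as a function of $t$, note that the quadratic terms cancel so it is affine, and conclude from $f(0)=0$ and $f(1)\ge 0$ that $f(t)\ge 0$ for all $t\ge 0$. Your version is, if anything, slightly more explicit in writing out the expansion and the identity $f(t)=t\,f(1)$.
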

\begin{proof}
Let 
\[
h(t) = \|y + t  (z-y) - C_1\|^2 - \|y + t  (z-y) - C_2\|^2.
\] 
From the identity above, it  can be  seen that $h(t)$ is a polynomial of degree at most $1$ in $t$. Since
$\|y -C_1\|=\|y -C_2\|$ gives $h(0) = 0$ and $\|z -C_1\|\ge \|z -C_2\| $ gives $h(1) \ge h(0) = 0$, it follows that $h(t)$ is a non-decreasing first order polynomial in $t$ and therefore
$$
 h(t) \ge 0=h(0), \;\; \forall t\ge 0,
$$
which completes the proof. 
\end{proof}

Finally we give the result 

\begin{theorem}\label{T1}
If $\mathcal{Q} \subseteq \text{int}(\text{conv}(C_1, \hdots, C_m))$ and $C_0 \in \partial \text{conv}(C_1, \hdots, C_m)$, $C_0 = \sum_{k=1}^n \alpha_k \cdot C_{\sigma_k}$ with $\alpha_k > 0$, $\sum_{k=1}^n \alpha_k = 1$, $\sigma_k \in \{1, \hdots, m\}$ with $ \sigma_k \neq \sigma_j$ for $k \neq j$ then number of solutions to the problem 
\begin{align}
\max_{x\in \mathcal{Q}} \|x - C_0\|
\end{align} is exactly one and the solution is a vertex of $\mathcal{Q}$.
\end{theorem}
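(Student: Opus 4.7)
Uniqueness follows directly from Corollary~\ref{C1}, so let $x^\star$ denote the unique maximizer; the whole task reduces to showing that $x^\star$ is a vertex of $\mathcal{Q}$ in the sense of Lemma~\ref{L2}, i.e.\ a point lying on at least $n$ of the bounding spheres $\partial \bar{\mathcal{B}}(C_k, r)$.

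The approach will be a perturbation argument reducing the boundary case to the interior case already handled in Lemma~\ref{L2}. Pick any $\tilde x \in \mathcal{Q}$; by the hypothesis $\mathcal{Q}\subseteq \text{int}(\text{conv}(C_1,\dots,C_m))$, the half-open segment $(C_0, \tilde x]$ is contained in $\text{int}(\text{conv}(C_1,\dots,C_m))$. Set $v := \tilde x - C_0$ and $C_0(t) := C_0 + t\cdot v$; then $C_0(t) \in \text{int}(\text{conv}(C_1,\dots,C_m))$ for every sufficiently small $t > 0$, and Lemma~\ref{L2} applied at the perturbed center yields some $x^\star(t) \in \mathop{\text{argmax}}_{x \in \mathcal{Q}} \|x - C_0(t)\|$ which is a vertex of $\mathcal{Q}$.

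Next I would exploit the finiteness of the vertex set of $\mathcal{Q}$ (being an intersection of finitely many balls, each vertex is cut out by some subsystem of $n$ sphere equations, with at most two solutions per subsystem). By pigeonhole there is a sequence $t_k \downarrow 0$ along which $x^\star(t_k)$ is constantly equal to some fixed vertex $v_0$. The function $t \mapsto \max_{x \in \mathcal{Q}} \|x - C_0(t)\|$ is continuous, being the maximum of a jointly continuous function over the fixed compact set $\mathcal{Q}$, so passing the identity $\|v_0 - C_0(t_k)\| = \max_{x \in \mathcal{Q}} \|x - C_0(t_k)\|$ to the limit gives $\|v_0 - C_0\| = \max_{x \in \mathcal{Q}} \|x - C_0\|$. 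Thus $v_0$ is a maximizer for $C_0$, and the uniqueness clause of Corollary~\ref{C1} forces $v_0 = x^\star$, so $x^\star$ is indeed a vertex of $\mathcal{Q}$.

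The most delicate step is the continuity/subsequence bookkeeping and, more conceptually, arguing that one can honestly interchange ``limit of a vertex maximizer'' with ``vertex maximizer of the limit'' in a non-convex situation. Lemma~\ref{L3} is not strictly required in the form above, but it is exactly the right tool for sharpening the pigeonhole step into a genuine stabilisation statement: applied along the perturbation ray, it prevents a vertex that has been overtaken as ``farthest from $C_0(t)$'' from ever regaining that status, so the optimal vertex is in fact the same for every sufficiently small $t > 0$, not merely along a subsequence, and the limit transition then becomes immediate.
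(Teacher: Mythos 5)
Your proposal is correct and follows the same core strategy as the paper: uniqueness via Corollary~\ref{C1}, then a perturbation of $C_0$ into $\text{int}(\text{conv}(C_1, \hdots, C_m))$ so that Lemma~\ref{L2} applies to the perturbed centers, followed by a limiting argument identifying the unique maximizer with a vertex. The differences lie in how the limit is handled. The paper perturbs specifically along the segment from $C_0$ toward the maximizer $v^{\star}$, invokes Lemma~\ref{L3} to stabilize a single optimal vertex $u^{\star}$ on a whole subsegment near $C_0$, and additionally tracks the value function on the opposite semi-axis $\mathcal{E}$ before concluding $u^{\star} = v^{\star}$ from continuity of $\zeta$ at the junction. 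You instead perturb in an arbitrary direction into $\mathcal{Q}$, extract a constant vertex along a subsequence by pigeonhole on the finite vertex set, and use continuity of $t \mapsto \max_{x \in \mathcal{Q}} \|x - C_0(t)\|$; this is slightly more economical, and you are right that Lemma~\ref{L3} and the semi-axis construction are dispensable for the conclusion. One shared caveat: the finiteness of the vertex set of $\mathcal{Q}$ is needed both for your pigeonhole and for the paper's stabilization step (Lemma~\ref{L3} only controls pairwise comparisons, so finitely many crossings must be guaranteed), and it rests on the genericity of the centers; if you want to avoid relying on it, replace the pigeonhole by compactness of $\mathcal{Q}$ together with the observation that the set of points lying on at least $n$ of the spheres is closed, so any limit of the perturbed maximizers is again a vertex and, by the same continuity argument, a maximizer for $C_0$.
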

\begin{proof}
For the uniqueness of the solution see Corollary \ref{C1}. For the fact that the unique solution is a vertex of $\mathcal{Q}$, let $\{v^{\star} \} = \mathop{\text{argmax}}_{x \in \mathcal{Q}}\|x - C_0\|$ and consider the segment $\mathcal{S} = \{v^{\star} + t \cdot (C_0 - v^{\star}) | t \in [0,1]\}$ which connects the vertex $v^{\star}$ of $\mathcal{Q}$ with the point $C_0$. Consider the points of $\mathcal{S} \cap \text{int}(\text{conv}(C_1, \hdots, C_m)) = \mathcal{S} \setminus \{C_0\}$. According to Lemma \ref{L2} the farthest points to these points in $\mathcal{Q}$ are among the vertexes of $\mathcal{Q}$.
%
%

 According to Lemma \ref{L3} exists $\epsilon > 0$ and $u^{\star} \in \{\text{vertexes of }\mathcal{Q}\}$ such that for all $y \in \mathcal{B}(C_0, \epsilon) \cap \mathcal{S} \setminus C_0$ one has $u^{\star} \in \mathop{\text{argmax}}_{x \in \mathcal{Q}} \| x - y\|^2$. Therefore, let $D_0 \in \mathcal{B}(C_0, \epsilon) \cap \mathcal{S} \setminus C_0$ and the following are true: 
\begin{enumerate}
\item The points from the segment opened at $C_0$, $\mathcal{D} = \{ D_0 + t \cdot (C_0 - D_0) | t \in [0,1)\}$ share a common vertex of $\mathcal{Q}$ as solution to problem (\ref{E2}), i.e 
\begin{align}
\exists u^{\star} \in \bigcap_{y \in \mathcal{D}} \mathop{\text{argmax}}_{x \in \mathcal{Q}} \|x - y\|^2 
\end{align} and $u^{\star}$ is a vertex of $\mathcal{Q}$.  
\item Any point from the semi-axis $\mathcal{E} = \{C_0 + t \cdot \frac{C_0 - D_0}{\|C_0 - D_0\|}| t \in [0,\infty)\}$ has $v^{\star}$ as the solution to the problem (\ref{E2}) i.e 
\begin{align}
\{v^{\star}\} = \bigcap_{y \in \mathcal{E}} \mathop{\text{argmax}}_{x \in \mathcal{Q}} \|x - y\|^2 
\end{align} where recall that $\{v^{\star} \} = \mathop{\text{argmax}}_{x \in \mathcal{Q}}\|x - C_0\|$. This can be easily seen by noting that $\mathcal{Q} \subseteq \bar{\mathcal{B}}(C_0, \|C_0 - v^{\star}\|) \subseteq \bar{\mathcal{B}}(y, \|C_0 - v^{\star}\| + \|y - C_0\|)$ and $\|y - v^{\star}\| = \|C_0 - v^{\star}\| + \|y - C_0\|$ for all $y \in \mathcal{E}$.  
\end{enumerate}

In the following we ought to prove that $v^{\star} = u^{\star} \in \{ \text{vertexes of } \mathcal{Q}\}$

Let $\mathcal{F} = \mathcal{D} \cup \mathcal{E}$ and the function $\zeta : \mathcal{F} \to \mathbb{R}$ with $\zeta(y) = \max_{x \in \mathcal{Q}} \|x - y\|$ for any $y \in \mathcal{F}$. Note that for any $y \in \mathcal{F}$ one has $\zeta(y)= \begin{cases} \|y - u^{\star}\|, y \in \mathcal{D}\\ \|y - v^{\star}\|, y \in \mathcal{E}\end{cases}$. 

Since the function $\zeta(\cdot)$ is continuous follows that exists $z \in \mathcal{F}$ such that $\zeta(z) = \|z - u^{\star}\| = \|z - v^{\star}\|$. Assuming that $z \in \mathcal{D}$ follows that $v^{\star} \in \mathop{\text{argmax}}_{x \in \mathcal{Q}} \|x - z\|$ hence $v^{\star} \in \{ \text{vertexes of } \mathcal{Q}\}$ since $z \in \mathcal{D} \subseteq \text{int}(\text{conv}(C_1, \hdots, C_m))$. Otherwise, if $z \in \mathcal{E}$ follows that $u^{\star} \in \mathop{\text{argmax}}_{x \in \mathcal{Q}} \|x - z\| = \{v^{\star}\} $ since $v^{\star}$ is the only solution to (\ref{E2}) for $z \in \mathcal{E}$. This, again, leads to the statement $v^{\star} = u^{\star} \in \{ \text{ vertexes of } \mathcal{Q} \} $.  
\end{proof}

The following remark gives a small note on the complexity needed for applying Theorem \ref{T1}. 
\begin{remark} [Complexity analysis]
Before moving to the last result from this section, it is worth saying that Theorem \ref{T1} can be used to compute the farthest point in an intersection of balls to a given fixed point meeting its requirements, in a polynomial number of steps. Indeed, it just shows that one needs to apply the theory presented in \cite{funcos} to obtain the maximizer as either an intersection of an axis with the boundary of $\mathcal{Q}$ either as a point in an intersection of convex sets. It is obvious that obtaining the axis and the convex sets requires a polynomial number of operations. 
\end{remark}

Finally we give a small lemma at the end of this section:
\begin{lemma} \label{L4}
Let $x^{\star}, C_1, \hdots, C_{n+1} \in \mathbb{R}^n$ distinct with 
\begin{enumerate}
\item $x^{\star} \not\in \text{conv}(C_1, \hdots, C_{n+1})$
\item Exists $\alpha_k > 0$ such that $C_{n+1} - x^{\star} = \sum_{k=1}^n \alpha_k \cdot (C_k - x^{\star})$
\item $\|x^{\star} - C_k\| = r$ for all $k \in \{1, \hdots, n+1\}$.
\end{enumerate} then 
\begin{align}
\{x^{\star} \} = \mathop{\text{argmax}}_{x \in \bigcap_{k=1}^n \bar{\mathcal{B}}(C_k,r)} \|x - C_{n+1}\| 
\end{align}
\end{lemma}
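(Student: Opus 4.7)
The plan is to show directly that $\|x - C_{n+1}\|^2 \leq \|x^\star - C_{n+1}\|^2 = r^2$ for every feasible $x$, with equality only at $x = x^\star$. The key geometric fact is that hypothesis~2 expresses $C_{n+1} - x^\star$ as a strictly positive combination of the vectors $C_k - x^\star$, while hypothesis~3 places all $C_k$ and $C_{n+1}$ on the sphere of radius $r$ about $x^\star$; together these force the coefficient sum $s := \sum_{k=1}^{n}\alpha_k$ to be strictly greater than $1$, which is the crux of the argument.

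First, I would establish $s > 1$. Computing $r^2 = (C_{n+1} - x^\star)\cdot(C_{n+1} - x^\star)$, substituting hypothesis~2 into one factor, and invoking the polarization identity $(C_{n+1} - x^\star)\cdot(C_k - x^\star) = r^2 - \tfrac{1}{2}\|C_{n+1} - C_k\|^2$ yields $r^2(s - 1) = \tfrac{1}{2}\sum_k \alpha_k \|C_{n+1} - C_k\|^2$. The right-hand side is strictly positive because all $\alpha_k > 0$ and $C_{n+1}$ is distinct from every $C_k$; and $r > 0$ since otherwise all $C_k$ would collapse onto $x^\star$, contradicting distinctness. Hence $s > 1$.

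Next, for an arbitrary feasible $x$, expand $\|x - C_{n+1}\|^2 = \|x - x^\star\|^2 + 2(x - x^\star)\cdot(x^\star - C_{n+1}) + r^2$ and rewrite the middle term via hypothesis~2 as $-2\sum_k \alpha_k (x - x^\star)\cdot(C_k - x^\star)$. The feasibility bound $\|x - C_k\| \leq r$, combined with the identity $(x - x^\star)\cdot(C_k - x^\star) = \tfrac{1}{2}(\|x - x^\star\|^2 + r^2 - \|x - C_k\|^2)$, yields $(x - x^\star)\cdot(C_k - x^\star) \geq \tfrac{1}{2}\|x - x^\star\|^2$ for every $k$. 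Weighting by $\alpha_k$ and summing gives $\|x - C_{n+1}\|^2 \leq (1-s)\|x - x^\star\|^2 + r^2 \leq r^2$, and since $1 - s < 0$ equality forces $x = x^\star$. The only genuinely non-routine step is the $s > 1$ computation; everything else follows by cosine-rule manipulations on the feasibility constraints. Hypothesis~1 does not seem to enter the algebra directly, but is consistent with the configuration (positivity of all the $\alpha_k$ together with $s > 1$ automatically places $x^\star$ outside $\text{conv}(C_1, \ldots, C_{n+1})$).
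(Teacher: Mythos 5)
Your proof is correct, and it is genuinely different from (and considerably more complete than) the paper's own argument. The paper only sketches a proof: it observes that $\bigcap_{k=1}^n \bar{\mathcal{B}}(C_k,r)$ is an intersection of equal-radius balls with $C_{n+1}$ outside the convex hull of the centers, invokes the earlier machinery (maximizer is a vertex, the intersection of the $n$ spheres has exactly two vertices, the farther one is $x^{\star}$), and leaves the details unverified. You instead give a direct, self-contained containment argument: from hypotheses 2 and 3 and the polarization identity you derive $r^2(s-1)=\tfrac12\sum_k\alpha_k\|C_{n+1}-C_k\|^2>0$, hence $s=\sum_k\alpha_k>1$; then for any feasible $x$ the bound $(x-x^\star)\cdot(C_k-x^\star)\ge\tfrac12\|x-x^\star\|^2$ gives $\|x-C_{n+1}\|^2\le(1-s)\|x-x^\star\|^2+r^2\le r^2$ with equality only at $x=x^\star$. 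In effect you prove $\bigcap_{k=1}^n\bar{\mathcal{B}}(C_k,r)\subseteq\bar{\mathcal{B}}(C_{n+1},r)$ with the spheres touching only at $x^\star$, which is exactly the containment the paper later uses this lemma for in Theorem \ref{T2}; your version buys rigor and avoids any appeal to the vertex-counting results, whose hypotheses the paper does not actually check here. Two minor remarks: your proof never uses hypothesis 1, which is fine (it is a consequence of the remaining hypotheses rather than an independent input, since $x^\star=\frac{1}{s-1}\bigl(\sum_k\alpha_k C_k - C_{n+1}\bigr)$ is an affine combination with a negative coefficient on $C_{n+1}$, and under affine independence this excludes $x^\star$ from the convex hull); and the parenthetical claim that positivity of the $\alpha_k$ plus $s>1$ "automatically" yields hypothesis 1 would need that affine-independence caveat to be fully watertight, but nothing in your main argument depends on it.
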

\begin{proof} One way to prove the above is by observing that $\bigcap_{k=1}^n \bar{\mathcal{B}}(C_k,r)$ is an intersection of equal radii balls and $C_{n+1}$ is outside of the convex combination of the balls centers. It can be proven using the above that the maximizer is a vertex. Since only two vertices exist and the second is closer to $C_{n+1}$ the conclusion follows.  
\end{proof}

\section{Application: Subset Sum Problem}
Let $n \in \mathbb{N}$ and consider $S \in \mathbb{R}^n$ and $T \in \mathbb{R}$. The associated subset sum problem, SSP(S,T) asks it exists $x \in \{0,1\}^{n}$ such that $x^T\cdot S = T$. For this, similar to \cite{sahni}, consider the optimization problem for $\beta > 0$:
\begin{align}
\max x^T\cdot(x - 1_{n \times 1}) + \beta \cdot S^T \cdot x \hspace{0.5cm} \text{s.t} \ \ \   x \in \begin{cases} S^T\cdot x \leq T\\
0 \leq x_i \leq 1 \hspace{0.3cm} \forall i \in \{1, \hdots, n\}
\end{cases} 
\end{align} Let the feasible set be denoted by $\mathcal{P} = \{x \in \mathbb{R}^n| S^T\cdot x \leq T, 
0 \leq x_i \leq 1 \hspace{0.3cm} \forall i \in \{1, \hdots, n\} \}$. 
\begin{remark}\label{R1}
It is easy to see that the objective function is always smaller than or equal to $\beta \cdot T$. In fact the objective function reaches the value $\beta \cdot T$ if and only if the SSP(S,T) has a solution. 
\end{remark}

Note that the objective function can be rewritten as
\begin{align}
x^T\cdot x + \left(\beta \cdot S - 1_{n \times 1} \right)^T \cdot x &= \left\| x - \frac{1_{n\times 1} - \beta \cdot S}{2} \right\|^2 - \left\| \frac{1_{n\times 1} - \beta \cdot S}{2} \right\|^2 \nonumber \\
& = \|x - C_0\|^2 - \|C_0\|^2
\end{align} with obvious definition for $C_0$. Since $C_0$ does not depend on $x$, we shall consider the optimization problem:
\begin{align}\label{E25}
\max_{x \in \mathcal{P}} \|x - C_0\|^2
\end{align} The problem (\ref{E25}) is a distance maximization over a polytope. Indeed $\mathcal{P}$ is the intersection of the unit hypercube with the halfspace $\{ x | S^T\cdot x \leq T\}$. Any maximizer shall be located in a corner of the polytope $\mathcal{P}$. 

In this section we shall substitute the set $\mathcal{P}$ with an intersection of balls with equal radii (ball polytopes) that preserve the corners of $\mathcal{P}$ if these are also corners of the unit hypercube. We shall prove that for the chosen intersection of balls, if the SSP(S,T) problem has a solution then it is also a solution to the maximization over the intersection of balls. 

\subsection{Construction of the intersection of balls associated to SSP(S,T)}
Here we use a similar construction to the one presented in \cite{funcos1}. As such, let $\mathcal{H}$ denote the unit hypercube, and consider the ball $\mathcal{B}\left( \frac{1}{2} \cdot 1_{n \times 1}, \frac{\sqrt{n}}{2} \right)$. 
For the facet $\{x | x^T \cdot e_k \geq 0\}$ of the hyper-cube, let $C_{k+} = \frac{1}{2}\cdot 1_{n \times 1} + d \cdot e_k$, while for the facet $\{x | x^T \cdot e_k \leq 1\}$ we choose $C_{k-} = \frac{1}{2} \cdot 1_{n \times 1} - d \cdot e_k$ where $d \geq \underline{d} \geq \frac{\sqrt{n}}{2}$ with $\left(d + \frac{1}{2} \right)^2 + \left( \frac{n}{4} - \frac{1}{4}\right) = r^2$ and $\underline{d}$ is explained later. With this choice of parameters one has 
\begin{align}\label{E26}
&\partial \bar{\mathcal{B}}(C_{k+}, r) \cap \partial \bar{\mathcal{B}}\left( \frac{1}{2} \cdot 1_{n \times 1}, \frac{\sqrt{n}}{2} \right) = \{x | x^T \cdot e_k \geq 0\} \cap \partial \bar{\mathcal{B}}\left( \frac{1}{2} \cdot 1_{n \times 1}, \frac{\sqrt{n}}{2} \right) \nonumber \\
&\partial \bar{\mathcal{B}}(C_{k-}, r) \cap \partial \bar{\mathcal{B}}\left( \frac{1}{2} \cdot 1_{n \times 1}, \frac{\sqrt{n}}{2} \right) = \{x | x^T \cdot e_k \leq 1\} \cap \partial \bar{\mathcal{B}}\left( \frac{1}{2} \cdot 1_{n \times 1}, \frac{\sqrt{n}}{2} \right) 
\end{align} where $e_k$ is the $k$'th column of the unit matrix in $\mathbb{R}^n$. Next, let 
\begin{align}
\mathcal{U}_r = \bigcap_{k=1}^n \bar{\mathcal{B}}(C_{k+},r) \cap & \bar{\mathcal{B}}(C_{k-},r) \Rightarrow \nonumber \\
\mathcal{U}_r \cap \partial  \bar{\mathcal{B}}\left( \frac{1}{2} \cdot 1_{n \times 1}, \frac{\sqrt{n}}{2} \right) &= \mathcal{H} \cap \partial  \bar{\mathcal{B}}\left( \frac{1}{2} \cdot 1_{n \times 1}, \frac{\sqrt{n}}{2} \right)
\end{align} therefore the intersection of balls $\mathcal{U}_r$ has the same corners as the hyper-cube $\mathcal{H}$. 

Next, under the assumption that $\{x | S^T \cdot x = T\} \cap \mathcal{H} \neq \emptyset$, let $P_s$ be the projection of $C = \frac{1}{2} \cdot 1_{n \times 1}$ on the hyper-plane $\{x | S^T \cdot x = T\}$ and $C_s = P_s - d_s \cdot \frac{S}{\|S\|}$ where $d_s^2 + \left( \frac{n}{4} - \|C - P_s\|^2\right) = r^2$. With this choice of parameters one has
\begin{align}\label{E28}
\partial \bar{\mathcal{B}}(C_s, r) \cap \partial  \bar{\mathcal{B}}\left( \frac{1}{2} \cdot 1_{n \times 1}, \frac{\sqrt{n}}{2} \right) = \{x | S^T \cdot x = T\} \cap \partial  \bar{\mathcal{B}}\left( \frac{1}{2} \cdot 1_{n \times 1}, \frac{\sqrt{n}}{2} \right)
\end{align} Finally let 
\begin{align}\label{E29}
\mathcal{Q}_r = \mathcal{U}_r \cap \bar{\mathcal{B}}(C_s, r)
\end{align}  Choose $\underline{d}$ such that $\mathcal{Q}_r \subseteq \text{int}(\text{conv}(C_{1\pm}, \hdots, C_{n\pm},C_s))$. 
We note the following:

\begin{remark}\label{R2} 
Note that as $r \to \infty$ one has $\mathcal{Q}_r \to \mathcal{P}$. 
\end{remark}

\begin{remark}\label{R3}
One can easily remark that if the SSP(S,T) has a solution then it is among the corners of $\mathcal{Q}_r$ for any $r$ meeting the above.  
\end{remark}

\subsection{A solution to the SSP(S,T)}
Next we prove that 

\begin{lemma}
if the SSP(S,T) has a solution $x^{\star}$ then for $\|C_0 - C\| \geq \frac{\sqrt{n}}{2}$ $x^{\star}$ is also a solution to the maximization problem:
\begin{align}
\max_{x \in \mathcal{Q}_r} \|x - C_0\|^2 \ \text{for} \ C_0 = C - \frac{\beta}{2} \cdot S \in \text{int} (\text{conv}(C_{1\pm}, \hdots, C_{n\pm},C_s))
\end{align}
\end{lemma}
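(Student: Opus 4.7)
The plan is to show the equivalent pointwise inequality $\|x - C_0\|^2 \leq \|x^\star - C_0\|^2$ for every $x \in \mathcal{Q}_r$; since $x^\star$ is a corner of $\mathcal{Q}_r$ by Remark~\ref{R3}, this delivers the result. The argument combines two ball constraints that are tight at $x^\star$: as $x^\star \in \{0,1\}^n$ is a vertex of the unit hypercube, $\|x^\star - C\| = \sqrt{n}/2$, and combined with $S^T x^\star = T$, identity~(\ref{E28}) places $x^\star$ on $\partial \bar{\mathcal{B}}(C_s, r)$ as well. So both $\|x - C\| \leq \sqrt{n}/2$ and $\|x - C_s\| \leq r$ hold on $\mathcal{Q}_r$ and are tight at $x^\star$.

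The first technical step is to establish the enclosure $\mathcal{U}_r \subseteq \bar{\mathcal{B}}(C, \sqrt{n}/2)$. From the symmetric pair $\|x - (C \pm d \cdot e_k)\|^2 \leq r^2$ together with $r^2 = d^2 + d + n/4$ one extracts $|(x - C)^T e_k| \leq (d + n/4 - \|x - C\|^2)/(2d)$ for each $k$; squaring, summing in $k$, and using $\|x - C\|^2 = \sum_k ((x - C)^T e_k)^2$ yields the implicit inequality $u \leq n(d + n/4 - u)^2/(4d^2)$ for $u := \|x - C\|^2$, from which $u \leq n/4$ (the value $u = n/4$ is the fixed point, and any $u > n/4$ forces the right-hand side strictly below $u$, contradicting the bound). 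The two tight constraints at $x^\star$ then give, for every $x \in \mathcal{Q}_r$, the linear bounds
\[
\|x\|^2 - \|x^\star\|^2 \leq 2 C^T(x - x^\star), \qquad \|x\|^2 - \|x^\star\|^2 \leq 2 C_s^T(x - x^\star).
\]
The decisive observation is that $C$, $C_s = C + \mu \cdot S$ with $\mu = (T - S^T C - d_s \|S\|)/\|S\|^2 < 0$, and $C_0 = C - (\beta/2) \cdot S$ all lie on the affine line $\{C + t \cdot S : t \in \mathbb{R}\}$, and that $C_0$ lies on the closed segment $[C, C_s]$. Writing $C_0 = \lambda \cdot C + (1 - \lambda) \cdot C_s$ for some $\lambda \in [0,1]$ and taking the corresponding convex combination of the two inequalities gives $\|x\|^2 - \|x^\star\|^2 \leq 2 C_0^T(x - x^\star)$, which rearranges to $\|x - C_0\|^2 \leq \|x^\star - C_0\|^2$ and completes the proof.

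The main obstacle is verifying $C_0 \in [C, C_s]$, equivalently $\|C_0 - C\| \leq \|C_s - C\|$. The stated hypothesis $\|C_0 - C\| \geq \sqrt{n}/2$ supplies a lower bound ensuring $C_0 \notin \mathcal{Q}_r \subseteq \bar{\mathcal{B}}(C, \sqrt{n}/2)$ so the problem is non-degenerate, while the matching upper bound must be extracted from $C_0 \in \text{int}(\text{conv}(C_{1\pm}, \hdots, C_{n\pm}, C_s))$ combined with the construction-imposed choice $d \geq \underline{d}$: the defining relation $d_s^2 = r^2 - n/4 + (T - S^T C)^2/\|S\|^2$ inflates $\|C_s - C\| = |\mu|\|S\|$ with $r$, so that the intersection of the line $\{C + t \cdot S\}$ with the convex hull reduces exactly to the segment $[C, C_s]$, trapping $C_0$ there. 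Once this colinearity-and-segment-membership is in hand, the remaining computation is a one-line convex combination.
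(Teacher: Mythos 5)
Your proof is correct, and it takes a genuinely different route from the paper's. The paper argues in two stages: first it invokes Remark~\ref{R2} ($\mathcal{Q}_r \to \mathcal{P}$ as $r \to \infty$) to get the conclusion for all $r \geq r_0$, which yields $\mathcal{Q}_{r}\subseteq\bar{\mathcal{B}}(C_0,\|x^{\star}-C_0\|)$ there, and then for smaller $r$ it asserts the chain $\mathcal{Q}_r \subseteq \bar{\mathcal{B}}(C_s,r)\cap\bar{\mathcal{B}}(C,\tfrac{\sqrt{n}}{2}) \subseteq \bar{\mathcal{B}}(C_0,\|x^{\star}-C_0\|)$ and uses $x^{\star}\in\partial\mathcal{Q}_r$. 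You instead prove the pointwise bound directly: the two constraints $\|x-C\|\leq\tfrac{\sqrt{n}}{2}$ and $\|x-C_s\|\leq r$ are tight at $x^{\star}$, their centers straddle $C_0$ on the line $\{C+tS\}$, and the convex combination of the two quadratic inequalities collapses to $\|x-C_0\|^2\leq\|x^{\star}-C_0\|^2$. This buys two things the paper's proof lacks: (i) an actual proof of the enclosure $\mathcal{U}_r\subseteq\bar{\mathcal{B}}(C,\tfrac{\sqrt{n}}{2})$, which the paper uses silently when it writes $\mathcal{Q}_r\subseteq\bar{\mathcal{B}}(C_s,r)\cap\bar{\mathcal{B}}(C,\tfrac{\sqrt{n}}{2})$; and (ii) the elimination of the limiting argument via Remark~\ref{R2}, which is the least rigorous step of the paper's version. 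Your argument is in effect a proof of the lens inclusion $\bar{\mathcal{B}}(C,\tfrac{\sqrt{n}}{2})\cap\bar{\mathcal{B}}(C_s,r)\subseteq\bar{\mathcal{B}}(C_0,\|x^{\star}-C_0\|)$ that the paper only asserts. Two small points to tighten: in the enclosure step, the inequality $4d^2u\leq n(d+\tfrac{n}{4}-u)^2$ admits a second solution branch $u\geq \tfrac{n}{4}+2d+\tfrac{4d^2}{n}$, so you must explicitly discard it using the cruder bound $u\leq d+\tfrac{n}{4}$ obtained by adding the symmetric pair of constraints (your ``any $u>n/4$ forces the right-hand side below $u$'' is not literally true for large $u$); and the segment membership $C_0\in[C,C_s]$ follows cleanly from the stated hypothesis $C_0\in\text{int}(\text{conv}(C_{1\pm},\hdots,C_{n\pm},C_s))$, since $C_s$ is a vertex of that hull, $C$ is interior, and $C_0$ lies on the ray from $C$ through $C_s$, so the ray meets the open hull exactly in $[C,C_s)$ --- this can replace your more tentative discussion of $d_s$ growing with $r$.
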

\begin{proof}
 Indeed since $C_0$ is on the segment $[C,C_s]$ (and therefore in the interior of the convex hull of the balls centers) from Remark \ref{R2} follows that exists $r_0 > 0$ such that $x^{\star} \in \mathop{\text{argmax}}_{x \in \mathcal{Q}_r} \| x - C_0\|$ for all $r \geq r_0$. This means that $\mathcal{Q}_r \subseteq \bar{\mathcal{B}}(C_0,\|x^{\star} - C_0\|)$. 

Now, let $r \leq r_0$. This will bring the points $C_{k\pm}$ and $C_s$ closer to $C$ since $r$ has to meet the criteria from the previous subsection see (\ref{E26}, \ref{E28}). We let $r$ have any value such that $C_0 \in (C, C_s)$ with $C_0$ remaining fixed. 

Since $\bar{\mathcal{B}}(C_s,r) \cap \bar{\mathcal{B}}\left(C,\frac{\sqrt{n}}{2} \right) \subseteq \bar{\mathcal{B}}(C_0,\|x^{\star} - C_0\|) \cap \bar{\mathcal{B}}\left(C,\frac{\sqrt{n}}{2} \right) $, and $\mathcal{Q}_r \subseteq \bar{\mathcal{B}}(C_s,r) \cap \bar{\mathcal{B}}\left(C,\frac{\sqrt{n}}{2} \right) $ follows that $\mathcal{Q}_r \subseteq  \bar{\mathcal{B}}(C_0,\|x^{\star} - C_0\|)$. Finally, since $x^{\star}$ is a corner of the hyper-cube being a solution to the SSP(S,T) follows that $x^{\star} \in  \partial \mathcal{Q}_r$ for any $r$, see Remark \ref{R3}, hence 
\begin{align}
x^{\star} \in \mathop{\text{argmax}}_{x \in \mathcal{Q}_r} \|x - C_0\|
\end{align}
\end{proof}

As such, in the following let $C_0$ be fixed meeting $\|C_0 - C\| \geq \frac{\sqrt{n}}{2}$ and we shall study the problem
\begin{align}
\max_{x\in \mathcal{Q}_r} \|x - C_0\|
\end{align} for any $r$ fixed, which allows $C_0 \in (C,C_s)$ where by $(C,C_s)$ we denote the open segment starting at $C$ and ending at $C_s$. 
If the SSP(S,T) has a solution, then 
\begin{align}\label{E34a}
\max_{x \in \mathcal{Q}_r} \|x - C_0\|^2 = \|C_0 - P_s\|^2 + \left( \frac{n}{4} - \|C - P_s\|^2 \right) =: R_0^2
\end{align} for any $r$ as in the previous subsection such that $C_0 \in (C,C_s)$. Let $r$ be fixed with this property, and let $x^{\star}$ denote the unique solution to the SSP(S,T). Then $x^{\star}$ is a vertex of $\mathcal{P}$ and is also a vertex of $\mathcal{Q}_r$. Construct $\mathcal{P}_{R^2}$ as in (\ref{E1}), a family of polytopes indexed after $R>0$. It follows from Theorem 1 in \cite{funcos1} that $x^{\star} $ is a vertex of $\mathcal{P}_{R_0^2}$ and $\mathcal{P}_{R_0^2} \subseteq \mathcal{Q}_r$, this being the first polytope in the family to enter the set $\mathcal{Q}_r$.  

It follows that in order to test the existence of $x^{\star}$ one just has to assert if $\mathcal{P}_{R_0^2} \subseteq \mathcal{Q}_r$ and if $\mathcal{P}_{R_0^2} \cap \partial \mathcal{Q}_r \neq \emptyset$. For this we do the following:

\textbf{Alternative problem} 

Since $x^{\star}$ is reportedly the unique maximizer of $\max_{x \in \mathcal{Q}_r} \|x - C_0\|$ follows that exists $\epsilon > 0$ such that 
\begin{align}
\{x^{\star} \} = \mathop{\text{argmax}}_{x \in \mathcal{Q}_r} \|x - y\|^2 \hspace{0.5cm} \forall y \in \mathcal{B}(C_0,\epsilon) 
\end{align} 

In order to find $x^{\star}$, if it exists, \textit{randomly} choose $n + 1$ points  $C_{0,p} \in \mathcal{B}(C_0,\epsilon) \cap \text{int}(\text{conv}(C_{1\pm}, \hdots, C_{n\pm},C_s))$ for all $p \in \{1, \hdots, n+1\}$ such that $C_0 \in \text{conv}(C_{0,1}, \hdots, C_{0,n+1})$ and consider the problems:

\begin{align}\label{E35}
\mathop{\text{argmax}}_{x \in \mathcal{Q}_r} \|x - C_{0,p}\|
\end{align}  For (\ref{E35}) form as in (\ref{E1}) the family of polytopes \\
$\mathcal{P}_{R^2,p} = \{x \in \mathbb{R}^n| \max_{k \in \{1\pm, \hdots, n\pm,s\}} \|x - C_k\|^2 - r^2 - \|x - C_{0,p}\|^2 \leq -R^2\}$

From Theorem 1 in \cite{funcos1} follows that exists $R_{0,p}$ such that $\mathcal{P}_{R_{0,p}^2,p} \subseteq \mathcal{Q}_r$ and $\{x^{\star} \}= \mathcal{P}_{R_{0,p}^2,p} \cap \partial \mathcal{Q}_r$ hence $R_{0,p} = \max_{x \in \mathcal{Q}_r} \|x - C_{0,p}\| = \|x^{\star} - C_{0,p}\|$. Therefore
\begin{align}\label{E38a}
R_{0,p} = \|x^{\star} - C_{0,p}\| = \|x^{\star} - C_0 + C_0 -C_{0,p}\| \leq R_0 + \epsilon \nonumber \\
R_0 = \|x^{\star} - C_0\| = \|x^{\star} -C_{0,p} + C_{0,p} - C_0\| \leq R_{0,p} + \epsilon \Rightarrow R_{0,p} \geq R_0 - \epsilon
\end{align} hence $R_{0,p} \in [R_0 - \epsilon, R_0 + \epsilon]$. 

It is known that for each $p$ one has $\mathcal{P}_{R_{0,p}^2,p} \subseteq \mathcal{Q}_r$. However, finding $R_{0,p}$ is hard in general, since deciding if $\mathcal{P}_{\rho^2,p} \subseteq \mathcal{Q}_r$ is equivalent with saying that 
\begin{align}
\max_{x \in \mathcal{P}_{\rho^2,p}} \|x - C_i\| \leq r \hspace{0.5cm} \forall i \in \{1\pm, \hdots, n\pm,s\}
\end{align} each of these problems are a distance maximization over a polytope and we do not have a polynomial algorithm for them. Of course, one can try to replace the polytope $\mathcal{P}_{\rho^2,p}$ with an intersection of balls $\mathcal{Q}_{\rho,p}$ as $\mathcal{Q}_{r}$ was obtained from $\mathcal{P}$ in (\ref{E29}). Unfortunately, $\mathcal{P}_{R_{0,p}^2,p} \subseteq \mathcal{Q}_r$ does not imply $\mathcal{Q}_{R_{0,p},p} \subseteq \mathcal{Q}_r$. That is, in general it is possible that the smalles $\rho$ for which $\mathcal{Q}_{\rho,p} \subseteq \mathcal{Q}_r$ is still larger than $R_{0,p}$. Although $x^{\star} \in \mathcal{Q}_{R_{0,p},p}$ it is possible in general to have $y \in \mathcal{Q}_{R_{0,p},p} $ with $ y \not \in \mathcal{Q}_r$. 

It makes sense therefore, to attempt to "trim" the sets $\mathcal{Q}_{R_{0,p},p}$. One can easily see that $x^{\star} \in \bigcap_{p = 1}^{n+1} \mathcal{Q}_{R_{0,p},p} \subseteq \mathcal{Q}_{R_{0,p},p}$ for all $p$. Even more 
\begin{align}
x^{\star} \in \bigcap_{p = 1}^{n+1} \mathcal{Q}_{R_{0,\underline{p}},p} \subseteq \mathcal{Q}_{R_{0,\underline{p}},p}  \hspace{0.5cm} \forall p \in \{1, \hdots, n+1\}
\end{align} where $R_{0,\underline{p}} = \min \{R_{0,p} | p \in \{1, \hdots, n+1\}\}$. As such, define

\begin{align}
\mathcal{T}_{\rho_1, \dots, \rho_{n+1}} := \bigcap_{p = 1}^{n+1} \mathcal{Q}_{\rho_p,p} \hspace{0.5cm} \mathcal{T}_{\rho} := \mathcal{T}_{\rho, \dots, \rho}
\end{align} Note that $x^{\star} \in \mathcal{T}_{R_{0, \underline{p}},p}$. Next we shall focus in the following on the problem:
\begin{align}
\rho^{\star} = \min \{ \rho | \mathcal{T}_{\rho} \subseteq \mathcal{Q}_r\}
\end{align}

\textbf{ A proper definition of $\mathcal{Q}_{\rho,p}$}

For each facet $k \in \{1, \hdots, 2\cdot n + 1\}$ of $\mathcal{P}_{\rho^2,p}$ let $P_{k,\rho,p}$ be the projection of $C = \frac{1}{2} \cdot 1_{n \times 1}$ on the facet and $C_{k,\rho,p} = P_{k,\rho,p} - d_{k,\rho,p} \cdot \frac{v_{k,p}}{\|v_{k,p}\|}$ where $v_{k,p}$ is the normal vector to the facet (note that $\rho$ is irreleant for this, since $\rho$ is just a translation of the facet) and $d_{k,\rho,p}^2 + \left( \frac{n}{4} - \|C - P_{k,\rho,p}\|^2\right) = r^2_{\rho,p} = r^2$. Here, for simplicity, we consider these balls to have the same radius as the initial balls.  This condition assures

\begin{align}\label{E38}
&\partial \mathcal{B}(C_{k,\rho,p}, r) \cap \partial \mathcal{B}\left( \frac{1}{2} \cdot 1_{n \times 1}, \frac{\sqrt{n}}{2}\right) \nonumber \\
& = \{x | v_{k,p}^T \cdot (x - P_{k,\rho,p}) = 0\} \cap \partial \mathcal{B}\left( \frac{1}{2} \cdot 1_{n \times 1}, \frac{\sqrt{n}}{2}\right)
\end{align} Furthermore, let $r$ large enough such that 
\begin{align}\label{E39}
\{C_{1\pm}, \hdots, C_{n\pm}, C_s \} \not\in \text{int}(\text{conv}\{C_{k, \rho, p} |k \in \overline{1, \hdots, 2 \cdot n + 1}, p \in \overline{1, \hdots, n+1} \})
\end{align}  and define 
\begin{align}
\mathcal{Q}_{\rho,p} = \bigcap_{k = 1}^{2\cdot n + 1} \bar{\mathcal{B}}(C_{k,\rho,p}, r) 
\end{align} 
Since $C_{0,p}$ were chosen randomly, we can apply Theorem \ref{T1} with probability one to obtain for the problem $\max_{ x\in \mathcal{T}_{\rho}} \|x - C_i\|$ for all $ i \in \{1\pm, \hdots, n\pm, s\}$ hence define
\begin{align}\label{E45a}
\{ x^{\star}_i(\rho_1, \hdots, \rho_{n+1}) \} := \mathop{\text{argmax}}_{x \in \mathcal{T}_{\rho_1, \hdots, \rho_{n+1}}} \|x - C_i\|
\end{align} Note that for any given $\rho_1, \hdots, \rho_{n+1}$ one can compute $x^{\star}_i(\rho_1, \hdots, \rho_{n+1})$ in polynomial time using the above results from the Section: Geometry Results. 

\begin{remark} The probability one is due to the fact that the set of points not allowing the stated results (edges of the convex hull of the points $C_{1\pm}, \hdots, C_{n\pm},C_s$) has zero measure in $R^{n}$, hence a random selection would almost surely not pick them.
\end{remark}
We are now able to state the main theorem of this section:
\begin{theorem} \label{T2}
If the $SSP(S,T)$ has a unique solution $x^{\star}$ then exists $\epsilon_0 > 0$ such that for all $\epsilon_0 \geq \epsilon > 0$ by choosing randomly $n+1$ points inside the closed ball $\bar{\mathcal{B}}(C_0,\epsilon)$ such that $C_0$ is in their convex hull, one has with probability one that exists $R_{0,p} \in [R_0-\epsilon, R_0 + \epsilon]$ for $p \in \{1, \hdots, n+1\}$ such that 

\begin{align}
x^{\star} \in \{  x^{\star}_i(R_{0,1}, \hdots, R_{0,n+1})| i \in \{1\pm ,\hdots, n\pm,s\}  
\end{align} where $x^{\star}_i(\rho_1, \hdots, \rho_{n+1})$ is given by (\ref{E45a}) and $R_0$ is given by (\ref{E34a}). 
\end{theorem}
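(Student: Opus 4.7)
The plan is to identify $x^\star$ as $x^\star_i(R_{0,1},\ldots,R_{0,n+1})$ for some index $i$ that is tight at $x^\star$ in $\mathcal{Q}_r$, by applying Lemma \ref{L4} to $\mathcal{T}_{R_{0,1},\ldots,R_{0,n+1}}$ with $C_i$ as target and $n$ carefully chosen ball centres from $\mathcal{T}$ as the other input. First I would set up the continuity. The preceding lemma, combined with uniqueness of the SSP solution, yields that $x^\star$ is the unique maximiser of $\|x - C_0\|$ over $\mathcal{Q}_r$; continuity of the argmax at a strict maximiser then furnishes an $\epsilon_0 > 0$ such that $\{x^\star\} = \mathop{\text{argmax}}_{x \in \mathcal{Q}_r}\|x - y\|$ for every $y \in \bar{\mathcal{B}}(C_0, \epsilon_0)$. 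For any $\epsilon \in (0, \epsilon_0]$, sample the $n+1$ points $C_{0,p}$ in $\bar{\mathcal{B}}(C_0,\epsilon)$ with $C_0 \in \text{conv}\{C_{0,p}\}$, and set $R_{0,p} := \|x^\star - C_{0,p}\|$; the inequality chain already displayed in (\ref{E38a}) places $R_{0,p} \in [R_0-\epsilon, R_0+\epsilon]$.

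Next I would check that $x^\star \in \mathcal{T}_{R_{0,1},\ldots,R_{0,n+1}}$ and collect the tight spherical equalities at $x^\star$. Membership follows from $\mathcal{P}_{R_{0,p}^2,p} \subseteq \mathcal{Q}_{R_{0,p},p}$ together with the fact that $x^\star$ is a vertex of $\mathcal{P}_{R_{0,p}^2,p}$ (the one last to enter $\mathcal{Q}_r$ as the polytope shrinks, by Theorem~1 of \cite{funcos1}). The spherical equalities come from the matching identity (\ref{E38}): the sphere $\partial\bar{\mathcal{B}}(C_{i,R_{0,p},p}, r)$ meets $\partial\bar{\mathcal{B}}(C,\sqrt{n}/2)$ exactly along the $i$-th facet hyperplane of $\mathcal{P}_{R_{0,p}^2,p}$, and the hypercube corner $x^\star$ lies simultaneously on $\partial\bar{\mathcal{B}}(C,\sqrt{n}/2)$ and on every facet hyperplane indexed by a tight constraint of $\mathcal{Q}_r$ at $x^\star$. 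Because $x^\star$ has $n+1$ such tight indices (the $n$ hypercube facets through the corner plus the SSP hyperplane), I get $\|x^\star - C_{i,R_{0,p},p}\| = r$ for every tight $i$ and every $p\in\{1,\ldots,n+1\}$, an over-determined collection of $(n+1)^2$ spherical equalities at $x^\star$ inside $\mathcal{T}$.

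The third step, which is the heart of the argument, is the application of Lemma \ref{L4}. Given a tight index $i$, I would pick $n$ ball centres of the form $C_{j_k,R_{0,p_k},p_k}$ with $j_k$ tight and $j_k \neq i$, and invoke Lemma \ref{L4} with $C_{n+1}:=C_i$ and $C_k:=C_{j_k,R_{0,p_k},p_k}$. The distance hypotheses are supplied by Step~2. For the directional hypothesis, I rely on the positively-spanning property of outward normals at a ball-intersection vertex: in the limit $\epsilon \downarrow 0$ each $C_{j,R_{0,p},p}$ collapses to a single point $C_{j,R_0}$ and $x^\star$ becomes a vertex of $\mathcal{Q}_{R_0}$ at which the $n+1$ vectors $\{x^\star - C_{j,R_0}\}_{j\text{ tight}}$ positively span $\mathbb{R}^n$; consequently the $n+1$ cones generated by $n$-subsets of $\{C_{j,R_0}-x^\star\}_{j\text{ tight}}$ cover $\mathbb{R}^n$, and for each $i$ one can choose $n$ indices $\{j_k\}\subseteq\{\text{tight}\}\setminus\{i\}$ so that $C_i-x^\star$ lies strictly inside the cone of $\{C_{j_k,R_0}-x^\star\}$. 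By openness of the interior of the cone, for $\epsilon$ small enough this containment is preserved when the limit centres are replaced by the perturbed ones $C_{j_k,R_{0,p_k},p_k}$, and the ``probability one'' qualifier excludes the measure-zero set of samplings producing a degenerate conic hull. The non-convex-hull hypothesis of Lemma \ref{L4} is enforced by the same geometric picture: all selected centres lie on the interior side of the corresponding facet normal at $x^\star$. Lemma \ref{L4} then gives $\{x^\star\} = \mathop{\text{argmax}} \|x-C_i\|$ over $\bigcap_{k=1}^n\bar{\mathcal{B}}(C_{j_k,R_{0,p_k},p_k},r)$; since $\mathcal{T}_{R_{0,1},\ldots,R_{0,n+1}}$ is contained in this intersection of $n$ balls and already contains $x^\star$, the argmax over $\mathcal{T}$ equals $\{x^\star\}$, so $x^\star = x^\star_i(R_{0,1},\ldots,R_{0,n+1})$.

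The main obstacle will be rigorously executing the perturbation and genericity argument in Step~3: one needs an explicit first-order expansion of $C_{j,R_{0,p},p}$ in the perturbation $C_{0,p}-C_0$, together with a careful compactness/openness argument, to ensure that some choice of the $n$ indices $\{j_k,p_k\}$ verifies both the directional and non-convex-hull hypotheses of Lemma \ref{L4} with probability one over the random sample.
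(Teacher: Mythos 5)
Your Steps 1 and 2 are sound and coincide with the paper's: setting $R_{0,p}=\|x^{\star}-C_{0,p}\|\in[R_0-\epsilon,R_0+\epsilon]$ via (\ref{E38a}), and using the circumsphere identity (\ref{E38}) together with the fact that $x^{\star}$ lies on $\partial\bar{\mathcal{B}}(C,\sqrt{n}/2)$ and on every tight facet hyperplane to conclude $\|x^{\star}-C_{k,R_{0,p},p}\|=r$ for each tight $k$ and each $p$. The gap is in Step 3, which is the heart of the proof. You invoke Lemma \ref{L4} with target $C_i$ and with $n$ centres taken from \emph{different} tight facets $j_k\neq i$, and you justify the required cone hypothesis by asserting that the $n+1$ outward normals $\{x^{\star}-C_{j,R_0}\}_{j\ \text{tight}}$ positively span $\mathbb{R}^n$. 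That assertion is false: these vectors generate the normal cone of a full-dimensional convex body at its vertex $x^{\star}$, and if they positively spanned $\mathbb{R}^n$ the normal cone would be all of $\mathbb{R}^n$ and the body would degenerate to the single point $\{x^{\star}\}$. Consequently the claimed covering of $\mathbb{R}^n$ by cones of $n$-subsets fails, and even where some covering exists you would still need $C_i-x^{\star}$ to lie in the cone of an $n$-subset that \emph{excludes} index $i$, which a covering does not provide. A concrete failure: for the instance $S>0$, $T=0$, $x^{\star}=0$, one has $S^T(C_s-x^{\star})=-d_s\|S\|<0$ while $S^T(C_{k+}-x^{\star})=\tfrac12\sum_i S_i+dS_k>0$ for every tight hypercube centre, so $C_s-x^{\star}$ is not a nonnegative combination of the other tight directions (nor of their small perturbations), and hypothesis 2 of Lemma \ref{L4} cannot be met for $i=s$ with cross-facet centres.

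The paper avoids this entirely by a different selection: it fixes \emph{one} tight index, say $1+$, and applies Lemma \ref{L4} with $C_{n+1}:=C_{1+}$ and with the $n$ centres $C_{1+,R_{0,1},1},\dots,C_{1+,R_{0,n},n}$, i.e.\ perturbed copies of the \emph{same} facet centre, one per perturbation point $C_{0,p}$, rather than one per facet. For these the cone condition $C_{1+}-x^{\star}=\sum_{p}\beta_p\,(C_{1+,R_{0,p},p}-x^{\star})$ with $\beta_p>0$ is inherited from the sampling requirement $C_0\in\text{conv}\{C_{0,1},\dots,C_{0,n+1}\}$ --- a hypothesis your Step 3 never actually uses --- since each $C_{1+,R_{0,p},p}$ is the image of $C_{1+}$ under the perturbation $C_{0,p}-C_0$. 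Lemma \ref{L4} then yields $\bigcap_{p}\bar{\mathcal{B}}(C_{1+,R_{0,p},p},r)\subseteq\bar{\mathcal{B}}(C_{1+},r)$ with $x^{\star}$ the unique farthest point from $C_{1+}$ over that intersection; since $\mathcal{T}_{R_{0,1},\dots,R_{0,n+1}}$ is contained in it and contains $x^{\star}\in\partial\mathcal{B}(C_{1+},r)$, it follows that $x^{\star}=x^{\star}_{1+}(R_{0,1},\dots,R_{0,n+1})$. Replacing your cross-facet selection with this same-facet, cross-perturbation selection repairs the argument; the remainder of your write-up then goes through.
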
 
\begin{proof}
Assume w.l.o.g that $\|x^{\star} - C_{1+}\| = r$  then we shall prove that $x^{\star} = x^{\star}_{1+} (R_{0,1}, \hdots, R_{0,n+1})$. Consider the points $C_{1+, R_{0,1},1}, \hdots, C_{1+, R_{0,n+1},n+1}$ formed each as presented above. Recall that each point is formed using the point $C_{1+}$ and a "disturbance" of the point $C_0$. Because these "disturbances" of $C_0$, a.k.a $C_{0,p}$, are chosen such that $C_0$ is in their convex hull follows that we can choose $n$ out of them (w.l.o.g the first $n$) such that $\exists \alpha_p > 0$ with 
\begin{align}
C_{1+} - C_0 = \sum_{p = 1}^n \alpha_p \cdot (C_{1+, R_{0,p},p}- C_0)
\end{align}   hence exists $\beta_p > 0$ with 
\begin{align}
C_{1+} - x^{\star} =  \sum_{p = 1}^n \beta_p \cdot (C_{1+, R_{0,p},p}- x^{\star})
\end{align} It is known that $x^{\star} \in \mathcal{Q}_r$ and $x^{\star} \in \mathcal{Q}_{R_{0,p},p}$ hence as assumed $\|x^{\star} - C_{1+}\| = r$ i.e is on the facet of the ball polytope $\mathcal{Q}_r$ generated by the point $C_{1+}$. It can be shown that $x^{\star}$ belong to the same facet of the ball polytopes $\mathcal{Q}_{R_{0,p},p}$ (these facets are disturbances of the same facet and coincide if $\epsilon \to 0$) hence $\|x^{\star} - C_{1+, R_{0,p}, p}\| = r$. Since $C_{1+},C_{1+, R_{0,1},1}, \hdots, C_{1+, R_{0,n},n} \in \partial \mathcal{B}(x^{\star},r)$ and \\ 
$x^{\star} \not\in \text{conv}(C_{1+},C_{1+, R_{0,1},1}, \hdots, C_{1+, R_{0,n},n})$ one can apply Lemma \ref{L4} to conclude that $\bigcap_{k=1}^n \bar{\mathcal{B}}(C_{1+, R_{0,p},p}) \subseteq \bar{\mathcal{B}}(C_{1+}, r)$ hence 
\begin{align}
\mathcal{T}_{R_{0,1}, \hdots, R_{0,n+1}} \subseteq \bigcap_{k=1}^n \bar{\mathcal{B}}(C_{1+, R_{0,p},p}) \subseteq \bar{\mathcal{B}}(C_{1+}, r) 
\end{align} 

Furthermore, because $x^{\star} \in \mathcal{T}_{R_{0,1}, \hdots, R_{0,n+1}} \cap \partial \mathcal{B}(C_{1+}, r) $ follows that 
\begin{align}
x^{\star} = \mathop{\text{argmax}}_{x \in \mathcal{T}_{R_{0,1}, \hdots, R_{0,n+1}}} \|x - C_{1+}\|
\end{align} hence $x^{\star} = x^{\star}_{1+}(R_{0,1}, \hdots, R_{0,n+1})$ because the maximizer is unique. 
\end{proof}

\begin{remark}
The above theorem allows one to compute the maximizer $x^{\star}$ if $R_{0,p}$ are given. From (\ref{E38a}) follows that $R_{0,p} \in [R_0-\epsilon, R_0 + \epsilon]$ with $R_0$ being given by (\ref{E34a}). However, the above method cannot be used to solve the SSP because the values of $R_{0,p}$ being not known have to be taken each from their respective interval. This leads to an exponential number of problems to be solved. For this reason we propose an easier problem:
\begin{align}\label{E51}
x^{\star}_{i}(\rho) := \mathop{\text{argmax}}_{x \in \mathcal{T}_{\rho}} \|x - C_i\| \hspace{0.5cm} \forall i \in \{1\pm, \hdots, n\pm, s\}
\end{align} and naturally ask if its solution enjoys the similar properties as those ensured by Theorem \ref{T2}.
\end{remark}

For the problem (\ref{E51}) we give the following result 

\begin{theorem}  \label{T3}
For any $\rho \in [R_{0} - \epsilon, R_0 + \epsilon]$ exists $\delta > 0$ such that 
\begin{align}
 \|C_i - x^{\star}_i(\rho)\| - \delta \leq \|C_i - x^{\star}_i(R_{0,1}, \hdots, R_{0,n+1})\|  \leq \|C_i - x^{\star}_i(\rho)\| + \delta
\end{align} for any $ i\in \{1\pm ,\hdots, n\pm,s\}$
\end{theorem}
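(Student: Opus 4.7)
The plan is to exploit continuity of the value function $f_i(\rho_1, \hdots, \rho_{n+1}) := \max_{x \in \mathcal{T}_{\rho_1, \hdots, \rho_{n+1}}} \|x - C_i\|$ with respect to the tuple of radii. Since both $\rho$ and every $R_{0,p}$ lie in the compact interval $[R_0 - \epsilon, R_0 + \epsilon]$, once $f_i$ is shown to be continuous on the cube $[R_0-\epsilon, R_0+\epsilon]^{n+1}$, uniform continuity will produce a modulus $\delta$ that bounds $|f_i(\rho,\hdots,\rho) - f_i(R_{0,1},\hdots,R_{0,n+1})|$, which is exactly what the theorem asserts.

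First I would establish that each ball $\bar{\mathcal{B}}(C_{k,\rho_p,p},r)$ varies continuously in $\rho_p$. This is a direct consequence of the defining formulas: changing $\rho_p$ translates the facet of $\mathcal{P}_{\rho_p^2,p}$ parallel to itself, so the projection $P_{k,\rho_p,p}$ of $C$ moves affinely in $\rho_p$, and the relation $d_{k,\rho_p,p}^2 = r^2 - (n/4 - \|C - P_{k,\rho_p,p}\|^2)$ then gives continuous dependence of $C_{k,\rho_p,p}$ on $\rho_p$, provided $r$ is large enough to keep the quantity under the square root strictly positive, which is part of the standing hypotheses in (\ref{E38}). Hence the intersection $\mathcal{T}_{\rho_1,\hdots,\rho_{n+1}}$ varies continuously in the Hausdorff metric; the non-emptiness of the intersection throughout the parameter cube is guaranteed by the membership of $x^{\star}$ in every such $\mathcal{T}$, which is what the proof of Theorem \ref{T2} produces.

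Next I would invoke the standard fact that if $K_t$ is a family of non-empty compact sets varying continuously in Hausdorff distance, then $t \mapsto \max_{x \in K_t}\|x-C_i\|$ is continuous in $t$; this follows from a short $\varepsilon/2$ argument comparing maximizers in $K_t$ and $K_s$. Applied to our setting, $f_i$ is continuous on $[R_0-\epsilon,R_0+\epsilon]^{n+1}$, hence uniformly continuous by compactness. The two evaluation points $(\rho,\hdots,\rho)$ and $(R_{0,1},\hdots,R_{0,n+1})$ both lie in this cube, so their $L^{\infty}$ distance is at most $2\epsilon$. Choosing $\delta$ equal to the modulus of continuity of $f_i$ evaluated at $2\epsilon$ then yields the desired two-sided inequality for every $i \in \{1\pm,\hdots,n\pm,s\}$ simultaneously.

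The main technical obstacle is the Hausdorff continuity of the intersection itself: intersections of continuously varying convex sets can jump when the intersection becomes empty or tangentially contacts a bounding sphere. Ruling this out requires using the non-degeneracy built into the construction --- specifically condition (\ref{E39}) together with the hypothesis $\mathcal{Q}_r \subseteq \text{int}(\text{conv}(C_{1\pm},\hdots,C_{n\pm},C_s))$ --- to show that each $\mathcal{Q}_{\rho_p,p}$ retains a uniform-in-$\rho_p$ non-empty interior as the radii slide, so that no collapse occurs on the parameter cube. Once this is in place, the continuity-plus-compactness argument outlined above delivers the uniform $\delta$ claimed by the theorem.
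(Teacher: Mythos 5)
Your proposal is correct in spirit and rests on the same underlying idea as the paper's proof --- that $\mathcal{T}_{\rho}$ is a small perturbation of $\mathcal{T}_{R_{0,1},\hdots,R_{0,n+1}}$ (the ball centers move by at most $O(\epsilon)$ while the radii stay fixed), so the optimal value can shift by at most some $\delta$ --- but the two arguments are packaged differently. The paper routes the comparison through the vertex structure: it asserts that every vertex of one set lies within $\delta$ of a vertex of the other, notes that each maximizer $x^{\star}_i(\cdot)$ is a vertex, and closes with two triangle-inequality estimates. You instead treat the value function $f_i(\rho_1,\hdots,\rho_{n+1}) = \max_{x\in\mathcal{T}_{\rho_1,\hdots,\rho_{n+1}}}\|x-C_i\|$ directly, arguing Hausdorff continuity of the parametrized intersection and then uniform continuity on the compact cube $[R_0-\epsilon,R_0+\epsilon]^{n+1}$. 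Your version is slightly more general (it does not need the maximizer to be a vertex, only that the sets vary continuously) and it makes explicit that $\delta$ can be taken as a modulus of continuity evaluated at $2\epsilon$, i.e.\ that $\delta\to 0$ as $\epsilon\to 0$, which is the only reading under which the theorem has nontrivial content (as stated, ``exists $\delta>0$'' is vacuous for bounded sets). Be aware, though, that both arguments hinge on the same unproven quantitative step: the paper simply asserts the $\delta$-correspondence of vertices, and you correctly flag that Hausdorff continuity of an intersection of moving balls can fail at degeneracies and must be secured by the non-degeneracy of the construction (non-empty interior uniformly over the parameter cube); neither your sketch nor the paper actually carries that verification out, and your appeal to $x^{\star}\in\mathcal{T}$ for non-emptiness implicitly assumes the SSP has a solution, which the theorem statement does not. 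So the proposal matches the paper in both strategy and in the location of its remaining gap.
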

\begin{proof}
 The set $\mathcal{T}_{\rho}$ is a perturbation of the set $\mathcal{T}_{R_{0,1}, \hdots, R_{0,n+1}}$. That is, the intersecting balls forming $\mathcal{T}_{\rho}$ are the exact balls whom intersection form $\mathcal{T}_{R_{0,1}, \hdots, R_{0,n+1}}$ with the centers translated by an amount less than $2 \cdot \epsilon$ and the same radius. As such one can say that exists $\delta > 0$ such that $v_{\rho} \in \mathcal{B}(v_{R_{0,1}, \hdots, R_{0,n+1}}, \delta)$ where $v_{\rho}$ is a vertex of $\mathcal{T}_{\rho}$ and  $v_{R_{0,1}, \hdots, R_{0,n+1}}$ is a vertex of $\mathcal{T}_{R_{0,1}, \hdots, R_{0,n+1}}$. Because $x^{\star}_i(\rho)$, the solution to (\ref{E51}) is a vertex of $\mathcal{T}_{\rho}$, follows that exists $v_{R_{0,1}, \hdots, R_{0,n+1}}$ a vertex of $\mathcal{T}_{R_{0,1}, \hdots, R_{0,n+1}}$ with $\|x^{\star}_i(\rho) - v_{R_{0,1}, \hdots, R_{0,n+1}} \| \leq \delta$ hence 
\begin{align}
\|C_i - x^{\star}_i(\rho)\| &= \|C_i - v_{R_{0,1}, \hdots, R_{0,n+1}} + v_{R_{0,1}, \hdots, R_{0,n+1}} - x^{\star}_i(\rho)\| \nonumber \\
& \leq \|C_i - v_{R_{0,1}, \hdots, R_{0,n+1}}\| + \|v_{R_{0,1}, \hdots, R_{0,n+1}} - x^{\star}_i(\rho)\|  \nonumber \\
& \leq \|C_i - x^{\star}_i(R_{0,1}, \hdots, R_{0,n+1})\| + \delta
\end{align} Because it also exists a vertex $v_{\rho}$ of $\mathcal{T}_{\rho}$ in the ball $\mathcal{B}(x^{\star}_i(R_{0,1}, \hdots, R_{0,n+1}), \delta)$ follows
 \begin{align}
\|C_i - x^{\star}_i(R_{0,1}, \hdots, R_{0,n+1})\| &= \|C_i - v_{\rho} + v_{\rho} - x^{\star}_i(R_{0,1}, \hdots, R_{0,n+1})\| \nonumber \\
& \leq \|C_i - v_{\rho}\| + \|v_{\rho} - x^{\star}_i(R_{0,1}, \hdots, R_{0,n+1})\|  \nonumber \\
& \leq \|C_i - x^{\star}_i(\rho) \| + \delta
\end{align}  From here, the conclusion easily follows.
\end{proof}

\begin{remark}
Unfortunately, in the above theorem we cannot give precise bounds on $\delta$, the amount with which the vertices of $\mathcal{T}_{\rho}$ are off to the vertices of $\mathcal{T}_{R_{0,1}, \hdots, R_{0,n+1}}$. This should be investigated in a future work. For the moment, they might depend on the distance $C_0$ has the points $C_i$ (the centers of the balls forming $\mathcal{Q}_r$) for $i \in \{1\pm, \hdots, n\pm,s\}$ since this also influences the angles of the facets. 
\end{remark}

\section{Conclusion}

In this paper we have presented results concerning the maximization of the
distance to a given point over an intersection of balls. In particular, we have 
shown that if the given point is on a facet of the convex hull boundary of the intersection 
of balls, then the maximizer is unique as long as the actual intersection is 
included in the convex hull. It is also shown that the maximizer is a 
vertex for the given context. These results prove a conjecture previously stated on a 
previous research paper \cite{funcos1}. 

 The results are then applied to the Subset Sum Problem (SSP). Here it is shown that
 the subset sum has a solution if and only if the maximum distance over an intersection 
 of balls to a certain point has a predefined expected value. Unfortunately, the point is always
 in the interior of the convex hull of the balls centers. This therefore, does not allow the 
 application of the polynomial algorithm presented in \cite{funcos1}. 
 
 A SSP with a single solution is then analyzed with the presented theory.







%
%



\end{document}